\documentclass[11pt]{amsart}  
\usepackage[cccs]{ABT}

%%Code cribbed from Stak Exchange to allow repeated theorem numbers
%\makeatletter
%\newcommand{\newreptheorem}[2]{\newtheorem*{rep@#1}{\rep@title}\newenvironment{rep#1}[1]{\def\rep@title{#2 \ref{##1}}\begin{rep@#1}}{\end{rep@#1}}}
%\makeatother
%\newreptheorem{thm}{Theorem}

%\makeatletter
%\def\blfootnote{\xdef\@thefnmark{}\@footnotetext}
%\makeatother

\newtheorem*{aside}{Aside}

\newcommand{\interior}[1]{\overset{\circ}{#1}}

\newcommand{\Uf}[2]{U(#2;#1)}
\newcommand{\Ufn}[3]{U(#3;#1\downarrow#2)}%\restr\,#2}}
\newcommand{\Ufrestrn}[3]{U(#3;#1\restr\,#2)}
\newcommand{\UXf}[3]{U^{#1}(#3;#2)}
\newcommand{\minsub}[2]{#1^{\min}_{#2}}

\title{Products of CW complexes} %: the full story}
\author{Andrew D. Brooke-Taylor}
\address{School of Mathematics\\
University of Leeds\\
Leeds LS2 9JT\\
United Kingdom}
\date{\today}

\begin{document}
\maketitle
\begin{abstract}
We provide for the first time a complete characterisation  of when the
product as topological spaces of two CW complexes is a CW complex, 
valid irrespective of set theory.
The previous state-of-the-art
was that, under set-theoretic assumptions like the Continuum Hypothesis, 
it is necessary and sufficient that a criterion of Whitehead or a
criterion of  Milnor holds.
The general case without making such assumptions is not quite so
simple, but we show that it
still essentially reduces to counting the number of cells.
%CW complexes are used extensively in algebraic topology, but the product of two CW complexes need not be a CW complex, as shown by Dowker. 
%Whilst Whitehead and Milnor 
%gave sufficient conditions for the product to be a CW complex, 
%all previous characterizations of those pairs of CW complexes which have 
%product a CW complex rely on extra set-theoretic axioms, such as the
%Continuum Hypothesis. 
%Here we provide a complete characterization of when the product of two CW complexes is a CW complex without using any extra set-theoretic 
%axioms. %\blfootnote{MSC2010: Primary: 54D50, 54B10, Secondary: 03E17}
\end{abstract}

\section{Introduction} 
CW complexes are considered to be well-behaved spaces that provide a convenient setting for the development of algebraic topology. 
Indeed, their dimension-by-dimension construction from Euclidean cells rules out many point-set-theoretic pathologies which are inessential to fundamental questions of homotopy theory, such as computation of the homotopy groups of spheres. 
Furthermore, since every topological space is weakly equivalent to a CW complex, no homotopy-theoretic information is lost by restricting attention to CW complexes.

The category of CW complexes does have drawbacks, however.
A well-known quirk is that the inclusion of this category into the category
of topological spaces does not preserve products:
the product as topological spaces of two CW complexes $X$ and $Y$
%with more than countably many cells 
need not be a CW complex.  
Given two CW complexes $X$ and $Y$, 
there is a natural cell structure on their Cartesian product $X\times Y$
which satisfies closure-finiteness (the ``C'' of ``CW''), 
but the compactly generated
topology required for %to make $X\times Y$ 
a CW complex is in general finer than
the product topology.  
In the 1949 paper introducing CW complexes \cite[Section~5]{Whi:CHI}, 
Whitehead showed that if
one of the two CW complexes is locally finite 
(see Definition~\ref{locltka} below),
the product is a CW complex, adding in a footnote 
``I do not know if this restriction [\,\ldots] is necessary''.
Soon afterwards Dowker \cite{Dow:TMC} 
%gave an example which 
demonstrated that \emph{some} assumption
on $X$ and $Y$ was necessary for the product to be a CW complex:
Dowker exhibited a CW complex $X$ with countably many cells and a CW complex $Y$
with continuum ($2^{\aleph_0}$) many cells 
such that $X\times Y$ is not a CW complex.
On the other hand, Milnor~\cite{Mil:CUBI} showed that Whitehead's
assumption itself was not necessary, as the product of any two CW complexes each
with countably many cells is a CW complex.

Subsequent advances have relied on making assumptions not just about the
CW complexes $X$ and $Y$, but about the whole domain of discourse: 
the model of set theory in which
$X$ and $Y$ lie.
%employing axioms beyond the standard ZFC axioms.
In models of set theory in which 
the Continuum Hypothesis (the assumption that $2^{\aleph_0}=\aleph_1$) holds, 
Liu Ying-Ming \cite{LYM:NSCPCW}
showed that the product of
CW complexes $X$ and $Y$ will be a CW complex if and only if $X$ or $Y$ is 
locally finite or both are locally countable --- that is, if and only if either
Whitehead or Milnor's criterion holds.
Building on work of Gruenhage \cite{Gru:kSPCIMS}, 
Tanaka \cite{Tan:PCW} showed that 
the validity of this characterisation is equivalent to a 
weaker set-theoretic assumption
%that in current notation would be denoted $\frb=\aleph_1$.
there denoted ``not $\textrm{BF}(\omega_2)$''; in current set-theoretic
notation, this is $\frb=\aleph_1$.

The present paper provides a complete answer to the question of when 
the product of a pair of CW complexes is again a CW complex.
No extra set-theoretic assumptions are required for our characterisation,
and if one does make the set-theoretic assumptions employed by Liu or Tanaka,
our characterisation reduces to theirs.
%Relevant definitions appear in Section~\ref{prelims}, but note that in
%Proposition~\Todo{reference} we show that locally countable and locally less
%than $\frb$ are equivalent respectively to having coutably many and fewer
%than $\frb$ many cells in each connected component.
\begin{thm}\label{prodCWcharthm}
Let $X$ and $Y$ be CW complexes.  Then $X\times Y$ is a CW complex if and only if
one of the following holds:
\begin{enumerate}
\item Either $X$ or $Y$ is locally finite.
\item\label{lltb}
Either $X$ or $Y$ has countably many cells in each connected component, and the
other has fewer than $\frb$ many cells in each connected component.
%\item\label{lltb} 
%Either $X$ or $Y$ is locally countable, and the other is locally 
%less than $\frb$.
\end{enumerate}
\end{thm}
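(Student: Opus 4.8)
The plan is to prove both directions after reducing to connected factors. Since $X\times Y=\bigsqcup_{i,j}X_i\times Y_j$ as a topological space, with $X_i,Y_j$ the connected components, a disjoint union is a CW complex exactly when every summand is, and conditions (1)--(2) for $X,Y$ unpack into a statement about the individual pairs $(X_i,Y_j)$; so it suffices to prove: for connected $X,Y$, the product is a CW complex iff $X$ or $Y$ is locally finite, or one of them is countable and the other has fewer than $\frb$ cells. The combinatorial work will use $\frb$ in its two roles --- the least size of a $\leq^*$-unbounded family in $\omega^\omega$ (here $\leq^*$ is eventual domination), and the dual fact that every family of size $<\frb$ is $\leq^*$-bounded --- together with the regularity of $\frb$ and of $\omega_1$.

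\emph{Sufficiency.} If a factor is locally finite this is Whitehead's theorem (a locally finite CW complex is locally compact, so the product topology on $X\times Y$ already coincides with the weak topology of the closure-finite product cell structure). Suppose instead $X$ is countable and $Y$ has $\kappa<\frb$ cells. I would argue by contradiction: let $A\subseteq X\times Y$ meet every closed cell in a closed set but not be closed, pick $q\in\overline A\setminus A$, and, using the neighbourhood machinery developed above, reduce to the case $q=(x_0,y_0)$ with $x_0,y_0$ points of non-local-finiteness. For each cell $f$ of $Y$ the slice $X\times\overline f$ is a CW complex ($X$ times the finite complex $\overline f$, by Whitehead), so $A$ is closed there and misses $q$; since $\overline e\times\overline f$ is a finite, hence metrizable, complex for each cell $e$ of $X$, and the ``slice obstruction'' along $\{x_0\}\times\overline f$ does not depend on the $X$-cell, the requirement placed on a neighbourhood of $x_0$ by this slice is governed by a single function $g_f\in\omega^\omega$ on the (countably many) cells of $X$, the finite residue on individual $X$-cells being absorbable by a suitable neighbourhood of $y_0$ on $f$. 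As $\kappa<\frb$, the family $\{g_f\}$ has a single $\leq^*$-bound $g$; the neighbourhood of $x_0$ dictated by $g$, together with a neighbourhood of $y_0$ assembled cell-by-cell from the residues, then misses $A$, the desired contradiction.

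\emph{Necessity.} If neither (1) nor (2) holds, a short cardinal computation (using that $\frb$ is regular and $\geq\aleph_1$) produces connected components $X'\subseteq X$ and $Y'\subseteq Y$, both not locally finite, with either both uncountable or one countable and the other of size $\geq\frb$. A connected CW complex that is not locally countable --- in particular any uncountable one --- has a point in the closures of uncountably many cells, hence (pigeonholing over dimensions, by regularity of $\omega_1$) of uncountably many of one dimension; likewise a connected complex of size $\geq\frb$ has a point in the closures of $\frb$ cells of one dimension (regularity of $\frb$), and a countable non-locally-finite complex has one in the closures of infinitely many cells; in each case one may choose the ``fan'' $\{e_\alpha\}$ with apex $x_0$ (respectively $\{f_\beta\}$ with apex $y_0$) so that every cell of the ambient complex contains only finitely many fan cells in its closure. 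Within each fan cell pick a sequence converging to the apex, arranged (as the machinery allows) so that the union of these points clusters only at the apex; then placing a single point $a_{\alpha\beta}$ in each product cell $e_\alpha\times f_\beta$ yields a set $A$ which meets every closed cell in a closed set and is sequentially closed. Since a CW complex is sequential, it remains to choose the $a_{\alpha\beta}$ so that $(x_0,y_0)\in\overline A$ in the product topology, making $A$ not closed and $X'\times Y'$ not a CW complex.

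The crux is thus forcing $(x_0,y_0)\in\overline A$: for every pair of ``threshold assignments'' to the two fans (the data of a basic product neighbourhood of the apex) some $a_{\alpha\beta}$ must beat both thresholds. If one fan is countable and the other has $\frb$ cells, I would index the large fan by a $\leq^*$-unbounded family $\{h_\gamma\}\subseteq\omega^\omega$ and make the nearness of $a_{e_n,f_\gamma}$ to $x_0$ equal to $h_\gamma(n)$ and its nearness to $y_0$ equal to $n$; unboundedness then defeats any pair of thresholds --- this is Dowker's obstruction, sharpened from $2^{\aleph_0}$ to $\frb$. If both fans are uncountable, cell-counting against $\frb$ is unavailable (both may have size $<\frb$), and I would use regularity instead: index both fans by $\omega_1$, fix for each $\beta<\omega_1$ a bijection $g_\beta\colon\omega\to\beta$, and set the nearness of $a_{\alpha\beta}$ to both apices to be $g_\beta^{-1}(\alpha)$ when $\alpha<\beta$ and minimal otherwise; given a threshold $K$ on the first fan, uncountably many $\alpha$ have $K(\alpha)\leq$ some fixed $j^\ast$, such $\alpha$ are cofinal in $\omega_1$, so some $\beta$ has infinitely many below it, and for the positions $n=g_\beta^{-1}(\alpha)\geq j^\ast\geq K(\alpha)$ the point $a_{\alpha\beta}$ beats $K$ and (taking $n$ large enough) any threshold on the second fan. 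I expect the real obstacle to be not this combinatorics but the supporting point-set work --- the reduction to points of non-local-finiteness, the flexibility of ``small'' neighbourhoods in a CW complex, and the verification that the chosen $a_{\alpha\beta}$ really witness non-CW-ness (the closed-cell and sequential-closure bookkeeping) --- which is precisely what the neighbourhood machinery is built to control.
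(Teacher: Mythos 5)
The decisive gap is in the sufficiency direction, at exactly the point the paper identifies as the crux. Your plan for the case ``$X$ countable, $Y$ with $\kappa<\frb$ cells'' is: for each cell $f$ of $Y$ extract a single function $g_f\from\N\to\N$ on the cells of $X$ recording what that slice demands of a neighbourhood of $x_0$, take a $\leq^*$-bound $g$ of the family $\{g_f\}$ (legitimate, as $\kappa<\frb$), and absorb ``the finite residue on individual $X$-cells'' by shrinking the $Y$-side neighbourhood on $f$. The residue is not finite. A basic neighbourhood of $x_0$ is built recursively through the dimensions of $X$ by collaring: its trace on a cell $e_j$ depends on its trace on every cell in the boundary of $e_j$. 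So if $g<g_f$ on even one cell $e_i$ (and eventual domination guarantees only finitely many such $i$ per $f$), the resulting neighbourhood fails to be contained in the one demanded by $g_f$ on \emph{every} cell whose closure meets $e_i$ --- and in a non-locally-finite complex there may be infinitely many such cells. You are then trying to compensate for a discrepancy spread over infinitely many $X$-cells by shrinking on a single $Y$-cell, and the compactness that would let you do this on a finite subcomplex is gone. This is precisely the ``finitely many errors multiply to infinitely many when cells of higher dimension are attached'' obstruction; the paper's Lemma~\ref{fnforal} exists to defeat it, by constructing the $X$-side function $f$ so that \emph{every} $f'$ with $f'\geq^* f$ and $f'\geq F$ admits a witness $q$ on the new $Y$-cell. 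The construction takes, at stage $n$, a maximum over all intermediate sequences $r$ with $F\restr n\leq r\leq f\restr n$, i.e.\ it promises in advance enough growth to cope with any of the finitely many possible initial-segment failures of domination (a Hechler-condition mechanism). Without some version of this idea, the $\leq^*$-bounding step does not close up into an open product neighbourhood, and your sufficiency argument does not go through.

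Two smaller remarks. First, for the necessity direction the paper simply invokes Tanaka's theorem (Theorem~\ref{TanakaThm}), whereas you sketch the counterexamples directly (the Dowker-type fan against a $\leq^*$-unbounded family of size $\frb$, and the $\aleph_1\times\aleph_1$ almost-disjoint-enumeration argument for two uncountable fans); that outline is the standard one and is plausible, but it is only an outline, with the point-set verifications (existence of suitable fans in a non-locally-countable component, sequential closedness of $A$) deferred. Second, the reduction to connected components and the locally finite case via Whitehead are fine and agree with the paper.
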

The cardinal $\frb$ in alternative (\ref{lltb}) is the so-called
\emph{bounding number}: 
the least cardinality of a set of functions from $\N$ to $\N$ which is 
unbounded with respect to eventual domination (Definition~\ref{bdefn}). 
This relatively concrete definition for a cardinal has been extensively studied
by set theorists, and $\frb$ is a standard cardinal in the pantheon of 
\emph{cardinal characteristics of the continuum} (for more on which see,
for example, Blass \cite{Bla:CCCC}).
%Thus, reduction to a comparison with $\frb$ can be
%considered a complete characterisation.
%Indeed 
The value of $\frb$ in terms of Cantor's $\aleph$ hierarchy can vary 
between different models of set theory, much as the cardinality 
of the reals $2^{\aleph_0}$ can, 
so a corollary of our result is that no characterisation purely in terms of
the $\aleph$ hierarchy can be valid in all models of set theory.
We will introduce $\frb$ in more detail in Section~\ref{prelimsb} below;
our proof and presentation
require no background in set theory of the reader.
We also review the basics of CW complexes in Section~\ref{prelimsCW} for 
readers unfamiliar with them; further details, including a subsection on
products where Dowker's example is presented, may be found in the Appendix
of the standard text of Hatcher \cite{Hat:AT}.

The results of Tanaka~\cite{Tan:PCW} rely on reducing to the case where the 
CW complexes in question are ``stars'' consisting of a 
central vertex and edges emanating from it.  In this one-dimensional setting,
the ``finitely many errors'' aspect of eventual domination is unproblematic, 
thanks to local compactness away from the ``bad'' central point.
For our full result no such reduction is 
possible, and ``finitely many errors'' 
can multiply to infinitely many when cells
of higher dimension are attached.
It turns out that eventual domination 
is nevertheless the right notion in this case:
whilst a na\"{\i}ve induction does not suffice to prove the 
theorem,
the proof of our Lemma~\ref{fnforal} 
shows that with sufficient care, an induction 
incorporating promises about the growth rate at later stages of the 
induction can be made to work.  Interestingly, such promises are also used in
\emph{Hechler forcing}, 
the typical set-theoretic technique used for building models of
set theory with a chosen value of $\frb$ (although no knowledge of Hechler 
forcing is needed for our arguments).

\section{Preliminaries}\label{prelims}

\subsection{The cardinal $\frb$}\label{prelimsb}

Recall that by definition $\aleph_1$ is the least uncountable cardinal, and
$2^{\aleph_0}$ is the cardinality of the reals; Cantor showed that 
$2^{\aleph_0}\geq\aleph_1$.  There are a number of well-studied
definitions for cardinals
that lie between these two bounds --- so-called \emph{cardinal characteristics of
the continuum}.  For an introduction to them see, for example, \cite{Bla:CCCC}.
Of relevance to us is the bounding number $\frb$.  
\begin{defn}\label{eventualdom}
Given two functions
$f$ and $g$ from $\N$ to $\N$, 
say that $f$ is \emph{eventually dominated} by $g$, written
$f\leq^*g$, if for all but finitely many $n$ in $\N$, $f(n)\leq g(n)$.
\end{defn}
We also write $f\leq g$ to mean that $f(n)\leq g(n)$ for all $n\in\N$.
\begin{defn}\label{bdefn}
The \emph{bounding number} $\frb$ is the least cardinality of
a set of functions $\N\to\N$ that is unbounded with respect to eventual 
domination, that is,
\[
\frb=\min\{|\calF|\st \calF\subseteq \N^{\N}\land 
\forall g\in \N^\N \exists f\in\calF\ \lnot(f \leq^*g)\}.
\]
\end{defn}
It is a simple exercise to show that $\frb$ is uncountable,
and obviously $\frb$ is at most the cardinality of $\N^\N$, 
so $\aleph_1\leq\frb\leq 2^{\aleph_0}$.  
If the Continuum Hypothesis holds then of course
$\aleph_1=\frb=2^{\aleph_0}$, but there are also models of set theory
(that is, universes of sets satisfying the standard ZFC axioms)
in which $\aleph_1=\frb<2^{\aleph_0}$, models in which
$\aleph_1<\frb=2^{\aleph_0}$, and models in which $\aleph_1<\frb<2^{\aleph_0}$.

Recall that a cardinal 
$\ka$ is said to be \emph{singular} if it can be expressed as a
``small'' union of ``small'' sets: 
\[
\ka=\bigcup_{\al<\ga}I_\al,
\]
with $\ga<\ka$ and $|I_\al|<\ka$ for each $\al<\ga$.  A cardinal is said to be
\emph{regular} otherwise.
\begin{lem}[Folklore]
The bounding number $\frb$ is regular.
\end{lem}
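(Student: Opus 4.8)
The plan is to argue by contradiction directly from Definition~\ref{bdefn}, using only that $\leq^*$ is transitive. Suppose $\frb$ were singular, so that $\frb=\bigcup_{\al<\ga}I_\al$ with $\ga<\frb$ and $|I_\al|<\frb$ for every $\al<\ga$. By the minimality clause in the definition of $\frb$, there is an unbounded family $\calF\subseteq\N^\N$ with $|\calF|=\frb$; enumerate it as $\calF=\{f_\xi\st\xi<\frb\}$, so its index set is $\frb$ itself.

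First I would split $\calF$ along the given decomposition: for each $\al<\ga$ set $\calF_\al=\{f_\xi\st\xi\in I_\al\}$, so that $\calF=\bigcup_{\al<\ga}\calF_\al$ and $|\calF_\al|\leq|I_\al|<\frb$. Since $\frb$ is, by definition, the \emph{least} cardinality of an unbounded family, each $\calF_\al$ has size $<\frb$ and is therefore bounded: choose $g_\al\in\N^\N$ with $f\leq^* g_\al$ for every $f\in\calF_\al$. Next I would bound the bounds: the family $\{g_\al\st\al<\ga\}$ has cardinality at most $\ga<\frb$, so it too is bounded, and I may pick $h\in\N^\N$ with $g_\al\leq^* h$ for all $\al<\ga$. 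Now for an arbitrary $f\in\calF$, fix $\al$ with $f\in\calF_\al$; then $f\leq^* g_\al$ and $g_\al\leq^* h$, and since ``for all but finitely many $n$'' is closed under finite intersection, $\leq^*$ is transitive, so $f\leq^* h$. Hence $h$ eventually dominates every member of $\calF$, contradicting the unboundedness of $\calF$.

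I do not expect a genuine obstacle here; the only thing worth isolating is the single observation that powers both applications of minimality --- that a union of fewer than $\frb$ many bounded families, each of size less than $\frb$, is again bounded --- which is immediate from Definition~\ref{bdefn} together with transitivity of $\leq^*$. (The same bookkeeping shows a little more, namely that $\frb$ has uncountable cofinality even before one knows it is regular, but regularity is exactly what the two-layer ``bound the pieces, then bound the bounds'' argument yields outright.)
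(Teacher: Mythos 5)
Your argument is correct and is essentially the paper's own proof: decompose the unbounded family of size $\frb$ along the putative singular decomposition, bound each piece using minimality, and then observe that the family of bounds (having size $\ga<\frb$) cannot itself witness unboundedness. The only cosmetic difference is where the contradiction lands --- you bound the bounds by a single $h$ and contradict unboundedness of $\calF$ via transitivity of $\leq^*$, whereas the paper notes that $\{g_\al\st\al<\ga\}$ would itself be an unbounded family of size less than $\frb$ --- but these are the same two applications of minimality.
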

\begin{proof}
Let $X$ be a set of functions from $\N$ to $\N$ of cardinality $\frb$ 
which is unbounded with respect to eventual domination; 
enumerate $X$ as $X=\{f_\ze\st\ze\in\frb\}$.  
Suppose for the sake of contradiction that $\frb$ can be decomposed as
$\frb=\bigcup_{\al<\ga}I_\al$ with
$\ga<\frb$ and $|I_\al|<\frb$ for every $\al<\ga$.
Then for each $\al$ there must be some $g_\al\from\N\to\N$ eventually
dominating each member of $\{f_\ze\st \ze\in I_\al\}$.
But then $\{g_\al\st\al<\ga\}$ would be an unbounded set of functions of
cardinality $\ga<\frb$, contradicting the minimality of $\frb$.
\end{proof}

\subsection{CW complexes}\label{prelimsCW}

We review the basics of CW complexes here; for more detail see, for example,
the Appendix of \cite{Hat:AT}.  We also present a result
(Proposition~\ref{lltkacomponents}) that allows
the formulation of our main theorem 
in terms of numbers of cells in connected components, as it is
given in Theorem~\ref{prodCWcharthm}. 

The $n$-disc $D^n$ is the closed unit ball in $\R^n$.
Its interior is here denoted $\interior{D^n}$ and
its boundary is the $(n-1)$-sphere $S^{n-1}$.
We denote the image of a function $\phi$ pointwise on a set $X$ by $\phi[X]$.
%We denote the interior of $D^n$ by $\int D^n$.
%Recall the following definition.

\begin{defn}\label{CWdefn}
A Hausdorff space $X$ is a \emph{CW complex} if there exist 
continuous functions $\phi^n_\al:D^n\to X$ (\emph{characteristic maps}) 
for $\al$ in an arbitrary index set and $n\in\N$
a function of $\alpha$, such that 
%A CW structure on a Haudorff space $X$ is a collection of 
%continuous functions $\phi^n_\al:D^n\to X$ such that 
the following conditions hold.
\begin{enumerate}
\item\label{CWunion} The restriction $\phi^n_\al\restr\interior{D^n}$ 
of $\phi^n_\al$ to the interior
of $D^n$ is a homeomorphism to its image, 
and $X$ is the disjoint union as $\al$ varies of these homeomorphic images
$\phi^n_\al[\interior{D^n}]$. 
We denote $\phi^n_\al[\interior{D^n}]$ by $e^n_\al$ and refer to it as an
\emph{$n$-dimensional cell}.
\item\label{CWclfin} (\emph{Closure-finiteness})
For each $\phi^n_\al$, the image $\phi^n_\al[S^{n-1}]$
of the boundary of $D^n$ is contained in finitely many cells all of dimension less
than $n$.
\item\label{W} 
The topology on $X$ is the \emph{weak topology}: a set is closed if and only
if its intersection with each closed cell $\phi^n_\al[D^n]$
%the closure of each cell $e^n_\al$ 
is closed.\footnote{That is, the topology which is finest
possible while
having all of the functions $\phi^n_\al$ continuous.}
\end{enumerate}
\end{defn}

A compact subset of a CW complex is contained in finitely many cells $e^n_\al$,
and each closed cell $\bar{e}^n_\al=\phi^n_\al[D^n]$ is compact, so (\ref{W}) is equivalent to 
the statement that the topology is \emph{compactly generated} --- a set is
closed if and only if its intersection with each compact set is closed.
We can also restrict to compact sets of the form of a convergent sequence with 
its limit point: a space is said to be \emph{sequential} if each subset $Y$ is
closed precisely when $Y$ contains the limit point of every convergent countable
sequence contained in $Y$.  Since each $\phi^n_\alpha[D^n]$ is sequential, 
a Hausdorff space satisfying
criteria (\ref{CWunion}) and (\ref{CWclfin}) of Definition~\ref{CWdefn}
is a CW complex if and only if it is sequential.

Compactly generated spaces are sometimes referred to as $k$-spaces
(for example in \cite{Gru:kSPCIMS} and \cite{Tan:PCW}), and 
compactly generated Hausdorff spaces as Kelley spaces
(for example in \cite{GaZ:CFHT}).  We have elected to stick with the more
descriptive ``compactly generated'' terminology used, for example,
in \cite{Hat:AT}.

A subcomplex $A$ of a CW complex $X$ is a subspace which is a union of cells of
$X$, such that if $e^n_\al$ is contained in $A$ then its closure 
$\phi^n_\al[D^n]$ is contained in $A$.  Clearly such an $A$ forms a CW complex
with those characteristics maps $\phi^n_\al$ from the CW structure on $X$ 
corresponding to the cells of $A$.
An important example of a subcomplex is the \emph{$n$-skeleton}
$X^n$ of $X$ for any given $n\in\N$: $X^n$ is
%For each $n\in\N$ we denote by $X^n$ 
the union of all cells $e^m_\al$ of $X$ of
dimension $m\leq n$. %, and refer to it as the \emph{$n$-skeleton} of $X$.
Every subcomplex of $X$ is closed in $X$.

Each cell $e_\al$ of a CW complex $X$, and hence each point $x$ in $X$, 
is contained in a finite subcomplex. 
Indeed there is clearly a least such subcomplex, constructible by starting
with $e_\al$ and working down through the dimensions adding only those cells
necessary to contain the boundaries of those that came before.
\begin{defn}\label{Xminal}
For $X$ a CW complex and $e_{X,\al}$ a cell of $X$, we denote by
$\minsub{X}{\al}$ the minimal (with respect to inclusion) subcomplex of $X$
containing $e_\al$
\end{defn}
There is more variability, however, if we want $x$ to be in the 
\emph{interior} of the subcomplex.
\begin{defn}\label{locltka}
Let $\ka$ be a cardinal.  A CW complex $X$ is said to be
\emph{locally less than $\ka$} if for all $x$ in $X$ there is a subcomplex
$A$ of $X$ with fewer than $\ka$ many cells such that $x$ is in the interior of
$A$.
We write \emph{locally finite} for locally less than $\aleph_0$, and 
\emph{locally countable} for locally less than $\aleph_1$.
\end{defn}

\begin{prop}\label{lltkacomponents}
Let $\ka$ be an uncountable regular cardinal.  Then a CW complex $X$ is
locally less than $\ka$ if and only if each connected component of $X$ contains
fewer than $\ka$ many cells.
\end{prop}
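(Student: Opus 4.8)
The plan is to prove the two implications separately; the backward direction is immediate, and the forward direction is where the two hypotheses on $\kappa$ get used. Throughout I would lean on two standard facts about a CW complex $X$: each closed cell is connected (a continuous image of a disc) and compact, so each connected component of $X$ is a union of cells that contains the closure of each of its cells, and is hence a subcomplex; and CW complexes are locally path-connected, so each component is open, and, being a subcomplex, also closed. For the backward direction I would then simply observe that if every component has fewer than $\kappa$ cells, then for any $x\in X$ the component $C$ containing $x$ is an open subcomplex with fewer than $\kappa$ cells, witnessing that $X$ is locally less than $\kappa$ in the sense of Definition~\ref{locltka}; this uses nothing about $\kappa$.

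For the forward direction, assume $X$ is locally less than $\kappa$ and fix a connected component $C$; the goal is to bound its number of cells below $\kappa$. For each $x\in X$ I would fix a subcomplex $A_x$ with fewer than $\kappa$ cells having $x$ in its interior. The engine of the argument is an enlargement step: for every subcomplex $D$ with fewer than $\kappa$ cells there is a subcomplex $D'$ with fewer than $\kappa$ cells such that $D\subseteq\operatorname{int}(D')$. To produce $D'$, I would note that for each cell $e_\alpha$ of $D$ the compact set $\overline{e_\alpha}$ is covered by finitely many of the open sets $\operatorname{int}(A_x)$, so the union $D_\alpha$ of the corresponding finitely many subcomplexes $A_x$ is a subcomplex, has fewer than $\kappa$ cells, and has $\overline{e_\alpha}$ in its interior (that union of finitely many open sets being an open subset of $D_\alpha$). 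Taking $D'=\bigcup_\alpha D_\alpha$ over the fewer-than-$\kappa$-many cells $e_\alpha$ of $D$, one gets a subcomplex with $D\subseteq\bigcup_\alpha\overline{e_\alpha}\subseteq\bigcup_\alpha\operatorname{int}(D_\alpha)\subseteq\operatorname{int}(D')$, and --- here using that $\kappa$ is regular --- with fewer than $\kappa$ cells, since it is a union of fewer than $\kappa$ sets each of size less than $\kappa$.

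Given the enlargement step, I would pick a cell $e$ of $C$, set $B_0=\minsub{X}{\alpha}$ for the index $\alpha$ of $e$ (a finite subcomplex, by Definition~\ref{Xminal}), and recursively build subcomplexes $B_0\subseteq B_1\subseteq B_2\subseteq\cdots$ with each $B_n$ having fewer than $\kappa$ cells and $B_n\subseteq\operatorname{int}(B_{n+1})$. Then $B=\bigcup_{n\in\N}B_n$ is a subcomplex that equals $\bigcup_n\operatorname{int}(B_{n+1})$, hence is open, and is closed because it is a subcomplex; and since $\kappa$ is uncountable and regular, $B$ has fewer than $\kappa$ cells. Thus $B$ is a nonempty clopen set meeting the connected set $C$, so $C\subseteq B$, and in particular $C$ has fewer than $\kappa$ cells, completing the proof.

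The step I expect to be the real obstacle is not any single computation but the realisation that one application of the enlargement step does not suffice --- $D'$ need not contain $D'$ in its interior --- so one must iterate $\omega$ times to manufacture a clopen set, and that the two hypotheses on $\kappa$ are exactly calibrated for this: regularity keeps each finite stage's union of fewer-than-$\kappa$-many small subcomplexes below $\kappa$, while uncountability (equivalently, $\operatorname{cf}(\kappa)>\omega$) keeps the final countable union below $\kappa$. The remaining ingredients --- closed cells are compact and connected, the interior of a union contains the union of the interiors, and components of a CW complex are clopen subcomplexes --- are routine point-set facts I would quote or verify quickly.
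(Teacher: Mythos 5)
Your proof is correct and follows essentially the same route as the paper's: an $\omega$-stage recursion producing subcomplexes, each with fewer than $\kappa$ cells and each contained in the interior of the next, using compactness of closed cells together with regularity of $\kappa$ at each stage and uncountability (cofinality greater than $\omega$) for the final union. The only cosmetic difference is that the paper keeps each stage connected so that the union is itself the connected component, whereas you allow possibly disconnected stages and instead deduce $C\subseteq B$ from connectedness of $C$ and clopenness of $B$.
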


\begin{proof}
Clearly if %$\ka$ is a regular cardinal and 
each connected component of a CW
complex $X$ has fewer than $\ka$ many cells, then $X$ is locally less than
$\ka$. 
For the converse, let $\ka$ be an uncountable regular cardinal,
let $X$ be a locally less than $\ka$ CW complex, and consider any point $x$ in
$X$; we shall show that the connected component of $X$ containing $x$ 
contains fewer than $\ka$ many cells.  
The argument is by a recursive construction of the component.

For the base case,
take a connected subcomplex $A_1$ of $X$ with fewer than
$\ka$ many cells which contains $x$ in its interior.  For notational
convenience let $A_{0}$ be the empty space.
Now suppose we have defined $A_i$ a subcomplex of $X$ with 
fewer than $\ka$ many cells,
with $x$ in its interior,
and moreover with every point of $A_{i-1}$ in its interior.
Consider a cell $e$ of $A_i$.
Since $X$ is locally less than $\ka$,
we may choose for every point $y$ of $\bar{e}$
a connected subcomplex $A_y$ of $X$ with fewer than $\ka$ many cells 
and
an open set $U_y$ of $X$ 
such that
$y\in U_y\subseteq A_y$.
In particular, $U_y\cap\bar{e}$ is open in $\bar{e}$, and if 
$z\in U_y\cap\bar{e}$ then certainly $z$ is in the interior of $A_y$.  
Since $\bar{e}$ is compact, a finite set $S_e$ of points $y$ 
suffice for the sets $U_y\cap\bar{e}$ to cover $\bar{e}$.
Take
\[
A_{i+1}=\bigcup_{\substack{e\text{ a cell}\\ \text{of }A_i}}\ \bigcup_{y\in S_e}A_y.
\]
Each $A_y$ has fewer than $\ka$ many cells, and the union is over fewer than
$\ka$ many indices, so by regularity of $\ka$,
$A_{i+1}$ has fewer than $\ka$ many cells.
Each $A_y$ in the union is connected to $A_i$, so $A_{i+1}$ is connected,
and by construction $A_i$ is contained in the interior of $A_{i+1}$, so
the inductive step is complete.

Finally, let $A=\bigcup_{i\in\N}A_i$.  Since $\ka$ is regular uncountable
and each $A_i$ has fewer than $\ka$ many cells, $A$ has fewer than $\ka$ 
many cells.  Since $A$ is an increasing union of connected spaces it is 
connected; it is open by construction and closed as a subcomplex of $X$,
so it is a connected component of $X$.
\end{proof}

We note that Tanaka~\cite[Lemma~4]{Tan:PCW},
citing the paper \cite{LYM:NSCPCW} (in Chinese) of Liu,
misstates this Proposition for local countability,
with an erroneous point- and cell-wise version.  As an example, take the closed
unit interval $I$ built as usual as an edge $e$ connecting two vertices. 
Attach a 2-sphere to a point $x$ in $e$
(that is, attach a 2-disc using the constant characteristic map taking every
point of its boundary circle to $x$), and attach
a $3$-sphere to every other point of the $2$-sphere. 
Then every open neighbourhood in $e$ of $x$ intersects the boundary of
only one other cell, but this CW complex is not locally countable at $x$.
Fortunately the global version, Proposition~\ref{lltkacomponents} with
$\ka=\aleph_1$, suffices for the proofs of Tanaka's results.

We will also require the following basic result, a proof 
of which may be found in \cite[Proposition~A.3]{Hat:AT}.

\begin{lem}[{Whitehead~\cite[\S5(G)]{Whi:CHI}}]
CW complexes are normal.
\end{lem}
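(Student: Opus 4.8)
The plan is to prove normality via the standard route: given two disjoint closed subsets, separate them by a continuous real‑valued function, built up skeleton by skeleton using Tietze's extension theorem. We would first note that, since each closed cell $\bar{e}^n_\al=\phi^n_\al[D^n]$ lies in the skeleton $X^n$, a subset of $X$ is closed exactly when its intersection with every $X^n$ is closed in $X^n$; hence a function on $X$ is continuous if and only if each restriction $f\restr X^n$ is. So it suffices, given disjoint closed $A,B\subseteq X$, to produce a continuous $f\from X\to[0,1]$ with $f\restr A\equiv 0$ and $f\restr B\equiv 1$: then the $f$-preimages of $[0,\tfrac12)$ and $(\tfrac12,1]$ are disjoint open neighbourhoods of $A$ and $B$, and $X$ is Hausdorff by the definition of CW complex. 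We would obtain $f$ as $\bigcup_{n}f_n$, with $f_n\from X^n\to[0,1]$ continuous, $f_n\restr(A\cap X^n)\equiv 0$, $f_n\restr(B\cap X^n)\equiv 1$, and $f_{n+1}\restr X^n=f_n$.

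For $n=0$ the skeleton $X^0$ is discrete, so we would take $f_0$ to be $0$ on $A\cap X^0$, $1$ on $B\cap X^0$, and $\tfrac12$ on the remaining vertices. Given $f_{n-1}$ with $n\geq 1$, we treat each $n$-cell of $X$ with characteristic map $\phi=\phi^n_\al\from D^n\to X^n$ and attaching map $\psi=\phi\restr S^{n-1}$: on the closed set $C=S^{n-1}\cup\phi^{-1}(A)\cup\phi^{-1}(B)\subseteq D^n$, where $\phi^{-1}(A)$ denotes the preimage, define $h\from C\to[0,1]$ to be $f_{n-1}\circ\psi$ on $S^{n-1}$, $0$ on $\phi^{-1}(A)$, and $1$ on $\phi^{-1}(B)$. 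The three pieces are closed in $C$ and agree on overlaps --- on $S^{n-1}\cap\phi^{-1}(A)$ the map $\psi$ lands in $A\cap X^{n-1}$, where $f_{n-1}\equiv 0$; similarly $f_{n-1}\circ\psi\equiv 1$ on $S^{n-1}\cap\phi^{-1}(B)$; and $\phi^{-1}(A)\cap\phi^{-1}(B)=\emptyset$ --- so $h$ is continuous by the pasting lemma. As $D^n$ is metric, hence normal, Tietze's theorem gives a continuous extension $g_\al\from D^n\to[0,1]$ of $h$. We then let $f_n$ agree with $f_{n-1}$ on $X^{n-1}$ and send $\phi^n_\al(x)\mapsto g_\al(x)$ for $x\in\interior{D^n}$; this is well defined because $\phi^n_\al\restr\interior{D^n}$ is a homeomorphism onto $e^n_\al$ and distinct cells are disjoint.

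Checking that $f_n$ is continuous with the stated values on $A$ and $B$ is routine. Continuity is tested on each closed cell of $X^n$: on cells of dimension below $n$ the restriction is the continuous $f_{n-1}$, while on $\bar{e}^n_\al$ one has $f_n\circ\phi^n_\al=g_\al$ on all of $D^n$ --- on $S^{n-1}$ because $f_n\circ\psi=f_{n-1}\circ\psi=h\restr S^{n-1}=g_\al\restr S^{n-1}$ --- so $f_n\restr\bar{e}^n_\al$ is continuous, $\phi^n_\al\from D^n\to\bar{e}^n_\al$ being a quotient map as a continuous surjection from a compact space onto a Hausdorff one. That $f_n\restr(A\cap X^n)\equiv 0$ and $f_n\restr(B\cap X^n)\equiv 1$ follows from the inductive hypothesis on $X^{n-1}$ together with $g_\al$ vanishing on $\phi^{-1}(A)$ and being $1$ on $\phi^{-1}(B)$. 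Assembling the $f_n$ yields $f$, and we are done. We do not expect any real obstacle here; the one point needing care is the bookkeeping that keeps the $f_n$ coherent, which is handled by building $h$ so that it already restricts to $f_{n-1}\circ\psi$ on $S^{n-1}$, forcing each Tietze extension $g_\al$ to glue with $f_{n-1}$.
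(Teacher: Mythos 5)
Your proof is correct and is essentially the standard argument the paper points to (Hatcher, Proposition A.3): skeleton-by-skeleton construction of a separating function via the Tietze extension theorem on each cell, using that $\phi^n_\al\from D^n\to\bar e^n_\al$ is a quotient map and that continuity on a CW complex is detected on closed cells. The paper itself gives no proof, only the citation, so there is nothing further to compare.
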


\section{The characterisation}

One direction of Theorem~\ref{prodCWcharthm} is immeditate from 
the following theorem of Tanaka \cite{Tan:PCW}.
%which notes that the proofs from earlier in his paper may be modified to show that
\begin{thm}[Tanaka]\label{TanakaThm}
The following are equivalent.
\begin{enumerate}
\item\label{kagtb} $\ka\geq\frb$
\item\label{prodCW} If $X\times Y$ is a CW complex, then either
\begin{enumerate}
\item\label{onelf} $X$ or $Y$ is locally finite, or
\item\label{lclltka} $X$ or $Y$ is locally countable and the other is locally less than $\ka$.
\end{enumerate}
\end{enumerate}
%\begin{multline*}
%(K\times L\text{ is a CW complex}\\
%\Implies (
%\text{$K$ or $L$ is locally finite, or}\\ 
%\text{$K$ or $L$ is locally countable and the
%other is locally less than $\ka$}))
%\end{multline*}
\end{thm}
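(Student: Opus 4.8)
The plan is to establish the two implications separately. Throughout I use the remarks after Definition~\ref{CWdefn}: the natural product cell structure on $X\times Y$ is closure-finite, so $X\times Y$ is a CW complex precisely when its product topology is compactly generated; moreover a product of subspaces carries the product topology, a closed subspace of a compactly generated space is compactly generated, and quotients of compactly generated spaces are compactly generated. Write $S_\lambda=\bigvee_{\xi<\lambda}[0,1]$ for the ``$\lambda$-star'', a $1$-dimensional CW complex whose wedge point $*$ lies on every edge and has every subcomplex neighbourhood containing all $\lambda$ edges; thus $S_\lambda$ is not locally finite once $\lambda\geq\aleph_0$ and not locally less than $\mu$ once $\mu\leq\lambda$.

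For $(2)\Rightarrow(1)$ I argue the contrapositive, so assume $\kappa<\frb$. If $\kappa$ is finite, take $X=Y=S_{\aleph_0}$: then $X\times Y$ is a CW complex by Milnor's theorem, neither factor is locally finite, and neither factor is locally less than $\kappa$ at its wedge point, so the conclusion of~(2) fails. If $\aleph_1\leq\kappa<\frb$, take $X=S_{\aleph_0}$ and $Y=S_\kappa$. That $X\times Y$ is a CW complex is a Milnor-style sequentiality argument exploiting $\kappa<\frb$: a sequentially closed non-closed set forces an obstruction at $(*,*)$, and the data recording how it avoids ``corners'' in the squares $\bar e_n\times\bar f_\alpha$ amount, for each edge of $Y$, to a function $\N\to\N$; being fewer than $\frb$ many, these admit a common eventual bound, and a careful recursive choice of that bound --- making promises about its later values, in the manner of Lemma~\ref{fnforal} --- produces a product neighbourhood of $(*,*)$ disjoint from the set, a contradiction. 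Here $X$ is locally countable while $Y$ is neither locally countable nor locally less than $\kappa$, so again~(2) fails; thus $\neg(2)$.

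For $(1)\Rightarrow(2)$, assume $\kappa\geq\frb$ and $X\times Y$ is a CW complex; if $X$ or $Y$ is locally finite we are done, so suppose not, and suppose toward a contradiction that it is not the case that one of $X,Y$ is locally countable and the other locally less than $\frb$ (which would suffice, as $\frb\leq\kappa$). As locally countable implies locally less than $\frb$, unwinding this negation leaves, up to swapping $X$ and $Y$, two cases: (A) neither $X$ nor $Y$ is locally countable; and (B) $X$ is not locally less than $\frb$ while $Y$ is not locally finite. By Proposition~\ref{lltkacomponents} (with $\ka=\aleph_1$ in case~(A), $\ka=\frb$ in case~(B)), in case~(A) both $X$ and $Y$ have a connected component with at least $\aleph_1$ cells, and in case~(B) $X$ has a component with at least $\frb$ cells. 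The technical core is then a reduction to stars: by passing to suitable subcomplexes, iteratively collapsing finite (hence compact) subcomplexes, and using normality of CW complexes, one produces quotient maps $q_X\colon X\to\bar X$ and $q_Y\colon Y\to\bar Y$ with $\bar X,\bar Y$ stars (wedges of compact complexes at a wedge point) such that in case~(A) both are not locally countable and in case~(B) $\bar X$ is not locally less than $\frb$ and $\bar Y$ is not locally finite; since the collapsed subcomplexes are compact, the tube lemma shows $q_X\times q_Y$ is still a quotient map, so if $X\times Y$ were compactly generated then so would be $\bar X\times\bar Y$, and it suffices to contradict this. In case~(A) I claim $S_{\aleph_1}\times S_{\aleph_1}$ is not compactly generated: enumerating the edges of the two factors as $\{e_\xi\}_{\xi<\omega_1}$, $\{f_\eta\}_{\eta<\omega_1}$, fixing injections $c_\xi\colon\xi\to\N$, and putting in $\bar e_\xi\times\bar f_\eta$ (for $\eta<\xi$) the point $p_{\xi,\eta}$ with both parameters equal to $1/(c_\xi(\eta)+2)$, the set $A=\{p_{\xi,\eta}:\eta<\xi\}$ meets each closed product cell in at most one point, hence meets each compact set in a finite, so closed, set, yet $(*,*)\in\overline A$ in the product topology: given a basic neighbourhood $U\times V$ of $(*,*)$, regularity of $\omega_1$ yields $k,\ell$ with $\{\xi:[0,1/k)\subseteq U\text{ along }e_\xi\}$ and $\{\eta:[0,1/\ell)\subseteq V\text{ along }f_\eta\}$ uncountable; choosing $\xi$ in the first set with infinitely many $\eta$ of the second below it, infinitude of $\{c_\xi(\eta):\eta\text{ as above}\}$ gives $\eta<\xi$ with $c_\xi(\eta)\geq\max(k,\ell)$, so $p_{\xi,\eta}\in U\times V$; as $(*,*)\notin A$, we are done. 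In case~(B) I claim $S_{\aleph_0}\times S_{\frb}$ is not compactly generated: with edges $\{e_n\}_{n\in\N}$, $\{f_\alpha\}_{\alpha<\frb}$ and an unbounded family $\{g_\alpha\}_{\alpha<\frb}\subseteq\N^\N$, put in $\bar e_n\times\bar f_\alpha$ the point $p_{n,\alpha}$ with parameter $1/(g_\alpha(n)+2)$ along $e_n$ and $1/(n+2)$ along $f_\alpha$; then $A=\{p_{n,\alpha}\}$ again meets each compact set closedly but is not product-closed, since a basic neighbourhood $U\times V$ of $(*,*)$ is encoded by thresholds $j_n$ (on $e_n$) and $m_\alpha$ (on $f_\alpha$), and unboundedness supplies $\alpha$ with $g_\alpha(n)\geq j_n$ for infinitely many $n$, in particular for some such $n\geq m_\alpha$, whence $p_{n,\alpha}\in U\times V$. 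Either way $X\times Y$ is not compactly generated, the desired contradiction, so~(2) holds.

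The step I expect to be the main obstacle is the reduction to stars in $(1)\Rightarrow(2)$: the cells witnessing failure of local countability (or of being locally less than $\frb$) may have high dimension and be attached at many distinct points --- as in Tanaka's example preceding Proposition~\ref{lltkacomponents}, where $X$ contains no honest star subcomplex --- so the stars must be manufactured by collapsing, and one must verify that the resulting product map remains a quotient; it is precisely here that the \emph{global}, component-wise form of Proposition~\ref{lltkacomponents} (rather than a point-wise one) and the local compactness of a star away from its wedge point are used, and it is exactly this reduction that has no analogue for the sharper Theorem~\ref{prodCWcharthm}. A secondary difficulty is the $\frb$-bounding step in $(2)\Rightarrow(1)$: plain eventual domination does not close the gap, and the bounding function must be built recursively with ``promises'' about its later growth.
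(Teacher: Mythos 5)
First, a point of comparison: the paper does not prove this theorem at all --- it is quoted from Tanaka's paper and used as a black box --- so your proposal is being measured against the literature rather than against an argument in the text. Your overall architecture is the standard one. The direction $(2)\Rightarrow(1)$ is essentially complete: the star counterexamples are correct, $S_{\aleph_0}\times S_{\kappa}$ is a CW complex for $\kappa<\frb$ by the positive half of Theorem~\ref{prodCWcharthmloc} (no circularity, since that half is proved independently of the present theorem), and your ``finite $\kappa$'' example also disposes of the case $\kappa=\aleph_0$, which your case split technically omits. The two Dowker-style sets you exhibit in $S_{\aleph_1}\times S_{\aleph_1}$ and $S_{\aleph_0}\times S_{\frb}$ are correct and complete as written.

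The genuine gap is the one you flag yourself: in $(1)\Rightarrow(2)$ the reduction to stars is asserted, not proved, and as described it does not yet constitute an argument. Concretely, what is needed is a lemma of the form: \emph{if a CW complex is not locally less than $\kappa$ (for $\kappa=\aleph_0$ or $\kappa$ regular uncountable), then it has a finite subcomplex $K$ such that at least $\kappa$ many cells $e_\alpha$ satisfy $\overline{e_\alpha}\cap K\neq\emptyset$.} This is exactly the point at which the paper's interval-with-spheres example bites --- the offending cells need not accumulate at any single point, nor even have closures meeting the closed cell containing the bad point --- so the lemma requires an iteration-plus-pigeonhole argument in the spirit of Proposition~\ref{lltkacomponents} (grow finite subcomplexes by adjoining the minimal subcomplexes of all cells whose closures meet the previous stage, use regularity of $\kappa$ to bound the growth, and finally locate a single cell met by $\kappa$ many closures), and none of this appears in your proposal. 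Your quotient-map packaging also adds a second unjustified step: products of quotient maps are not in general quotient maps, so ``the tube lemma shows $q_X\times q_Y$ is still a quotient map'' needs the collapse maps to be perfect (closed with compact fibres), and your explicit bad-set computations are written for wedges of intervals rather than for the ``wedges of compact complexes'' the collapse would actually produce. Both complications can be avoided: once you have finite subcomplexes $K\subseteq X$ and $L\subseteq Y$ as in the lemma, choose in each relevant cell a sequence converging to a point of $K$ (respectively $L$), define $A$ by your recipes with ``parameter $1/(c_\xi(\eta)+2)$'' replaced by ``the $c_\xi(\eta)$-th term of the chosen sequence'', and note that $A$ meets every compact subset of $X\times Y$ in a finite set while every open set containing the compact set $K\times L$ contains a tube $U\times V$ and hence a point of $A$; so $A$ is sequentially closed but not closed in $X\times Y$ itself, and no quotients are required.
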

In light of Proposition~\ref{lltkacomponents},
in order to prove Theorem~\ref{prodCWcharthm} 
it remains to show that if $\ka=\frb$, the disjunction
((\ref{onelf}) or (\ref{lclltka})) of statements from Theorem \ref{TanakaThm}
implies that $X\times Y$ is a CW complex.
The fact that if $X$ or $Y$ is locally finite then $X\times Y$ is a CW complex
was shown by Whitehead \cite[\S5]{Whi:CHI}, and
follows from the fact that 
the Cartesian product of a compactly generated space with a 
locally compact space is compactly generated
(see, for example, \cite[Proposition~A.15]{Hat:AT})
--- locally finite CW complexes are clearly locally compact.

It is therefore incumbent on us to show that if $X$ is locally countable
and $Y$ is locally less than $\frb$ then the product of $X$ and $Y$ is a CW 
complex.  We may consider individual connected components, so for the rest of
this section, we shall assume that $X$ is a CW complex with countably many
cells, and $Y$ is a CW complex with fewer than $\frb$ many cells.

We now give some definitions that will be useful for the proof.
For the sake of expositional clarity various dependencies will be omitted from
the notation.

We follow the standard notation from set theory, that when a natural number $n$
is used in place of a set of natural numbers, it denotes the 
$n$-element set $\{0,\ldots, n-1\}$.
For a function $s:I\to K$, the function that extends $s$ by taking value
$q$ on some $\al\notin I$ is denoted by $s\cup\{(\al,q)\}$.

We start by defining 
a descending sequence of neighbourhoods $B_n(x)$ open in a cell $e$ 
that form a neighbourhood base in $e$ of a point $x$.

\begin{defn}\label{Bphirx}
Suppose $x$ is a point in a CW complex $X$, with $x$ lying in an open cell $e$ of 
dimension $d$ with characteristic map $\phi$, and suppose $n$ is a natural number.
Let $z$ be $\phi^{-1}(x)$, and let $r\in\R$ be the minimum of $1/(n+1)$ and 
half the distance from $z$ to the boundary of $D^d$.  Then we define
$B_n(x)$ to be the image under $\phi$ of the open ball of radius $r$ about
$z$ in $D^d$.
\end{defn}
The set $B_n(x)$ need not be open as a subset of $X$ ---
to build an open neighbourhood in $X$ we must also
consider higher-dimensional cells whose boundaries intersect $B_n(x)$.
For these cells we use the following ``collar neighbourhoods''.

\begin{defn}\label{CnU}
Let $X$ be a CW complex, $d$ a natural number, 
and $U\subseteq X^d$ a subset of $X^{d}$ which is open
in $X^d$.
Let $e$ be a $(d+1)$-dimensional cell of $X$ with characteristic map
$\phi$, and let $n$ be a natural number.  We define the open subset 
$C^e_n(U)$ of $\bar e$ by
\[
C^e_n(U)=\phi\left[\left\{t\cdot\vec{z}\st t\in(\tfrac{n}{n+1},1]\land
\vec{z}\in \phi^{-1}(U)\subseteq S^d\right\}\right],
\]
where the $\cdot$ denotes scalar multiplication in the vector space $\R^{d+1}$.
\end{defn}

Note that %in Definition~\ref{CnU}, 
if $\phi^{-1}(U)$ is empty then
$C^e_n(U)$ will also be empty, and that $C^e_n$ distributes over unions:
for any $U$ and $V$, $C^e_n(U\cup V)=C^e_n(U)\cup C^e_n(V)$.

\begin{defn}\label{In}
Suppose $X$ is a CW complex with its cells enumerated as $e_i$ 
for $i$ in some index set $I$, 
and for each $i$ in $I$ let $d(i)$ be the dimension of $e_i$.
Then for each $n\in\N$ we let
\[
I^n=\{i\in I\st d(i)\leq n\}.
\]
\end{defn}
Thus, for finite $n$ 
the $n$-skeleton $X^n$ is the union over $i$ in $I^n$ of the cells $e_i$.
Using these notions, we may define an open neighbourhood of a point from a
function to the naturals.  

\begin{defn}
Let $X$ be a CW complex with its cells enumerated as $e_i$ for $i$ in some index
set $I$, and for each $i$ let $d(i)$ be the dimension of $e_i$.
Let $x$ be a point of $X$, lying in cell $e_{i_0}$.
Then for any function $f\from I\to\N$ we define the open neighbourhood
$\UXf{X}{f}{x}$, or simply $\Uf{f}{x}$ when $X$ is clear,
of $x$ in $X$ recursively in dimension as follows.
\begin{itemize}
\item For all $i$ in $I^{d(i_0)}$ other than $i_0$,
%\item For $i<i_0$, 
we take $\UXf{X}{f}{x}\cap e_{i}=\emptyset$.
\item For $i=i_0$, we take $\UXf{X}{f}{x}\cap e_{i}=B_{f(i)}(x)$.  
\item If $\UXf{X}{f}{x}\cap X^m$ has been defined for some $m\geq d(i_0)$, and 
$i\in I$ is such that $d(i)=m+1$, we set 
\[
\UXf{X}{f}{x}\cap\bar{e}_i=C^{e_i}_{f(i)}(\UXf{X}{f}{x}\cap X^m).
\]
\end{itemize}
\end{defn}

Clearly every such set $\UXf{X}{f}{x}$ is open in $X$.
Note also that if $A$ is a subcomplex of $X$ and $J\subseteq I$ is the set of
indices of cells in $A$, $J=\{i\in I\st e_i\subseteq A\}$, then 
$\UXf{A}{f\restr\,J}{x}=\UXf{X}{f}{x}\cap A$.
We thus use the notation $\Uf{f}{x}$
omitting the superscript without fear of confusion,
with the domain of $f$ dictating the CW complex in which $\Uf{f}{x}$ is taken.

For functions $f\from I\to\N$
we shall write $f\downarrow n$ as a shorthand for the restriction
$f\restr I^n$; thus,
$\Ufn{f}{n}{x}=\Uf{f}{x}\cap X^n$.
In the arguments below we shall even 
use this notation when $f$ has not yet been
defined on $I\smallsetminus I^n$.
Also, as per the set-theoretic convention discussed above,
%For $f\from\N\to\N$ we shall follow the set-theoretic standard and write
$f\restr i$ denotes the restriction of $f$ to natural numbers less than $i$, 
$f\restr i=f\restr\{0,\ldots,i-1\}$.

Since each $\Uf{f}{x}$ for $f\from I\to\N$ is open, it will suffice for our proof of
Theorem~\ref{prodCWcharthm} to produce sets of this form.  
In some sense this is also necessary:

\begin{lem}
For any CW complex $X$ with cells $e_i$, $i\in I$, and for any $x$ in $X$,
the sets $\Uf{f}{x}$ as $f$ varies over functions from $I$ to $\N$ 
form an open neighbourhood base at $x$.
\end{lem}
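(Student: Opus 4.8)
Since each $\Uf{f}{x}$ is open and contains $x$, the content of the lemma is that every open set $W\subseteq X$ with $x\in W$ contains $\Uf{f}{x}$ for some $f\from I\to\N$; that is what I would prove. Write $e_{i_0}$ for the cell containing $x$ and $d_0=d(i_0)$ for its dimension. The plan is to define $f$ by recursion on dimension, starting from $d_0$ --- the values $f(i)$ for $i\in I^{d_0}$ with $i\neq i_0$ are never used in the definition of $\Uf{f}{x}$, so take them to be $0$ --- while maintaining the invariant
\[
\overline{\Ufn{f}{m}{x}}\ \subseteq\ W \qquad\text{for all }m\geq d_0,
\]
closures being taken in $X$. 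Note that each such closure lies inside the skeleton $X^m$, which is a subcomplex and hence closed in $X$. Since $\Uf{f}{x}=\bigcup_{m\geq d_0}\Ufn{f}{m}{x}$, the invariant then gives $\Uf{f}{x}\subseteq W$, as required.

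For the base case, $\phi_{i_0}$ restricts to a homeomorphism of $\interior{D^{d_0}}$ onto $e_{i_0}$, so $\phi_{i_0}^{-1}(W)$ is an open neighbourhood of $z=\phi_{i_0}^{-1}(x)$ in $D^{d_0}$; since the radii defining the balls $B_n(x)$ tend to $0$, for $f(i_0)$ large enough the corresponding closed ball about $z$ lies inside $\phi_{i_0}^{-1}(W)\cap\interior{D^{d_0}}$. Its $\phi_{i_0}$-image is compact, hence closed in the Hausdorff space $X$, and contains $\Ufn{f}{d_0}{x}=B_{f(i_0)}(x)$, so the invariant holds at $m=d_0$.

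For the recursion step, suppose $f$ is defined on $I^m$ so that $V:=\Ufn{f}{m}{x}$ has $\overline V\subseteq W$. Fix an $(m+1)$-cell $e_i$ with characteristic map $\phi_i$ and put $C_i=\overline{\phi_i^{-1}(V)\cap S^m}$, a compact subset of $S^m$. For $n\in\N$ let
\[
K^i_n=\phi_i\bigl[\{\,t\cdot\vec z\st t\in[\tfrac{n}{n+1},1],\ \vec z\in C_i\,\}\bigr],
\]
a compact --- hence closed --- subset of $\bar{e}_i$ which contains $C^{e_i}_n(V)$. The sets $K^i_n$ decrease with $n$, their intersection equals $\phi_i[C_i]$, and $\phi_i[C_i]\subseteq\overline V$ because $\phi_i^{-1}(V)\subseteq\phi_i^{-1}(\overline V)$ with $\phi_i^{-1}(\overline V)$ closed, whence $C_i\subseteq\phi_i^{-1}(\overline V)$. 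Thus $\{K^i_n\smallsetminus W\}_{n\in\N}$ is a decreasing chain of compact sets with empty intersection, so by compactness $K^i_{n_i}\subseteq W$ for some $n_i$; set $f(i)=n_i$. Carrying this out for every $(m+1)$-cell, $V':=\Ufn{f}{m+1}{x}=V\cup\bigcup_{d(i)=m+1}C^{e_i}_{f(i)}(V)$ is contained in $Z:=\overline V\cup\bigcup_{d(i)=m+1}K^i_{f(i)}\subseteq W$, and it remains only to see that $Z$ is closed, for then $\overline{V'}\subseteq Z\subseteq W$. As $X^{m+1}$ carries the weak topology with respect to its closed cells and $Z\subseteq X^{m+1}$, it is enough to check that $Z\cap X^m=\overline V$ and that $\phi_i^{-1}(Z)$ is closed in $D^{m+1}$ for every $(m+1)$-cell $e_i$; both follow from the disjointness of distinct open cells, which forces $\bar{e}_k\cap\bar{e}_i\subseteq X^m$ for $k\neq i$ and, together with $K^k_{f(k)}\cap X^m=\phi_k[C_k]\subseteq\overline V$, gives $\phi_i^{-1}(K^k_{f(k)})\subseteq\phi_i^{-1}(\overline V)$ for $k\neq i$, so that $\phi_i^{-1}(Z)=\phi_i^{-1}(\overline V)\cup\phi_i^{-1}(K^i_{f(i)})$ is a union of two closed sets.

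I expect the recursion step to be the main obstacle, and in particular the decision to formulate the invariant in terms of the closures $\overline{\Ufn{f}{m}{x}}$ rather than the sets $\Ufn{f}{m}{x}$ themselves. Since $\phi_i^{-1}(V)\cap S^m$ need not be compact, there may be no single collar width $n$ with $C^{e_i}_n(V)\subseteq W$; passing to the compact closure $C_i$ is exactly what makes the finite-intersection argument choosing $f(i)$ available, and it is also what keeps the (possibly infinite) union $Z$ over the $(m+1)$-cells closed. The opening reduction and the base case are routine.
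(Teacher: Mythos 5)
Your proof is correct and follows essentially the same route as the paper's: a recursion on dimension maintaining the invariant $\overline{\Ufn{f}{m}{x}}\subseteq W$, with the collar width $f(i)$ on each $(m+1)$-cell chosen by a compactness argument applied to a compact set sitting over the closure of the lower-dimensional part. The only difference is that you spell out in more detail the verification that the resulting union is closed in the weak topology, which the paper leaves implicit.
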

\begin{proof} Given an open neighbourhood $V$
of $x$, we construct recursively on dimension a function
$f\from I\to\N$ such that $\overline{\Uf{f}{x}\cap X^n}\subset V\cap X^n$ 
for every $n\in\N$. 
If $x$ is in cell $e_{i_0}$ of dimension $d(i_0)$, 
then as the base case we may choose
$f(i_0)$ large enough that $B_{f(i_0)}(x)$ has closure contained in $V$,
since $V\cap e_{i_0}$ is open in $e_{i_0}$, and set
$f(i)=0$ for every other $i$ in $I^{d(i_0)}$.
For the inductive step,
suppose we have defined $f$ on $I^n$
%all those $i\in I$ with $e_i$ of dimension at most $n$, 
in such a way that $\overline{\Ufn{f}{n}{x}}\subset V\cap X^n$, 
and suppose $e_\ell$ is an $(n+1)$-cell of $X$
with characteristic map $\phi_\ell$.  
Then $\phi_\ell^{-1}(\overline{\Ufn{f}{n}{x}})$ 
is a compact subset of $\phi_\ell^{-1}(V)\cap S^n$, 
and thus we may choose $f(\ell)$
sufficiently large that $C^{e_\ell}_{f(\ell)}(\Ufn{f}{n}{x})$ also has closure 
contained in $\phi_\ell^{-1}(V)$.
\end{proof}

We shall repeatedly require the following lemma allowing us to extend open
sets on finite subcomplexes.

\begin{lem}\label{addacell}
Suppose $W$ and $Z$ are CW complexes, $W'$ is a finite subcomplex of $W$,
$Z'$ is a finite subcomplex of $Z$, $U$ is a subset of $W'$ that is open
in $W'$, $V$ is a subset of $Z'$ that is open in $Z'$, 
and $H$ is a sequentially closed subset of $W\times Z$
such that the closure of $U\times V$ is disjoint from $H$.  Let $e$ be a cell
of $Z$ whose boundary is contained in $Z'$.  Then there is a $p\in\N$ such that
$U\times (V\cup C^e_p(V))$ has closure disjoint from $H$.
\end{lem}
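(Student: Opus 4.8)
First I would unwind the set in question so as to localise the whole problem inside a compact metric space, where the merely sequential hypothesis on $H$ can be upgraded to genuine closedness. Since the closure of a product is the product of the closures and closure distributes over finite unions,
\[
\overline{U\times(V\cup C^e_p(V))}=\overline U\times\bigl(\overline V\cup\overline{C^e_p(V)}\bigr).
\]
We write $F_p$ for this set. Put $Z''=Z'\cup e$: since the boundary of $e$ lies in $Z'$ this is a subcomplex of $Z$, and it is finite, hence compact and metrizable. As $\overline U\subseteq W'$ (a subcomplex is closed) and $\overline V\cup\overline{C^e_p(V)}\subseteq Z''$, every $F_p$ lies in $W'\times Z''$, which is a product of finite CW complexes and so is itself compact and metrizable. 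This is the crucial point: although $H$ need not be closed in $W\times Z$ --- indeed $W\times Z$ need not even be sequential --- the subspace $W'\times Z''$ is closed in $W\times Z$, so $H\cap(W'\times Z'')$ is sequentially closed in the metrizable space $W'\times Z''$, hence closed there, hence compact. Inside $W'\times Z''$ we are thus back in the familiar world of compact Hausdorff spaces.

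Next I would compute $\bigcap_{p\in\N}F_p$. The sets $C^e_p(V)$ shrink as $p$ grows (the interval $(\tfrac{p}{p+1},1]$ does), so the $F_p$ form a decreasing sequence of closed --- hence compact --- subsets of $W'\times Z''$, and since $\overline V$ is contained in every term, $\bigcap_p F_p=\overline U\times\bigl(\overline V\cup\bigcap_p\overline{C^e_p(V)}\bigr)$. The only real content is the inclusion $\bigcap_p\overline{C^e_p(V)}\subseteq\overline V$. To prove it, let $\phi\colon D^{d+1}\to Z$ be the characteristic map of $e$ and let $A_p\subseteq D^{d+1}$ be the collar region consisting of the points $t\vec z$ with $\tfrac{p}{p+1}<t\le 1$ and $\vec z\in\phi^{-1}(V)\cap S^d$, so that $C^e_p(V)=\phi[A_p]$ as in Definition~\ref{CnU}. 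Since $D^{d+1}$ is compact and $Z$ is Hausdorff, $\phi$ is a closed map, so $\overline{C^e_p(V)}=\phi[\overline{A_p}]$; a standard argument with nested nonempty compact sets then gives $\bigcap_p\phi[\overline{A_p}]=\phi\bigl[\bigcap_p\overline{A_p}\bigr]$. Each $\overline{A_p}$ lies in the set of points $t\vec z$ with $\tfrac{p}{p+1}\le t\le 1$ and $\vec z\in\overline{\phi^{-1}(V)\cap S^d}$, and the intersection of these sets over all $p$ is just $\overline{\phi^{-1}(V)\cap S^d}$, since only $t=1$ survives. Hence $\bigcap_p\overline{C^e_p(V)}\subseteq\phi\bigl[\overline{\phi^{-1}(V)\cap S^d}\bigr]\subseteq\overline{\phi[\phi^{-1}(V)\cap S^d]}\subseteq\overline V$, using once more that $\phi$ is continuous and closed and that $\phi[\phi^{-1}(V)\cap S^d]\subseteq V$. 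Therefore $\bigcap_p F_p=\overline U\times\overline V=\overline{U\times V}$, which is disjoint from $H$ by hypothesis.

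Finally I would invoke Cantor's intersection theorem. The sets $F_p\cap H\cap(W'\times Z'')$ form a decreasing sequence of compact subsets of $W'\times Z''$ whose intersection is $\overline{U\times V}\cap H=\emptyset$; since a decreasing sequence of nonempty compact sets in a Hausdorff space has nonempty intersection, some $F_p\cap H\cap(W'\times Z'')$ is empty. As $F_p\subseteq W'\times Z''$ this says $\overline{U\times(V\cup C^e_p(V))}\cap H=\emptyset$, which is exactly the conclusion. The step I expect to need the most care is not any single calculation but the reduction in the first paragraph: one must observe that every set in play lies in a compact metrizable subspace of $W\times Z$, on which ``sequentially closed'' becomes ``closed'', precisely because one cannot run a compactness argument in $W\times Z$ directly. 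The computation of $\bigcap_p\overline{C^e_p(V)}$ is routine but mildly fiddly, owing to the scalar-multiplication parametrisation of the collar and the possible non-injectivity of $\phi$ on $S^d$.
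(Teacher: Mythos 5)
Your proof is correct, but it takes a genuinely different route from the paper's. The paper also passes to the compact CW complex $W'\times(Z'\cup e)$, where sequential closedness of $H$ upgrades to closedness, but from there it uses \emph{normality}: it separates $\overline{U\times V}$ from $H\cap(W'\times(Z'\cup e))$ by disjoint open sets, covers the compact set $\overline{U\times V}$ by finitely many basic product opens of the special collar form $R\times(T\cup C^e_n(T))$ inside the separating open set, and takes $p$ larger than all the resulting $n$. You instead run a Cantor-intersection argument: you show the closures $F_p=\overline{U\times(V\cup C^e_p(V))}$ decrease with $\bigcap_p F_p=\overline{U\times V}$, so that the compact sets $F_p\cap H$ decrease to the empty set and hence one of them is already empty. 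The two proofs trade different technical burdens. The paper's needs normality plus the (mildly fiddly) observation that collar-shaped sets form a neighbourhood base at points of $Z'$ inside $Z'\cup e$; yours needs neither, but instead requires the explicit computation $\bigcap_p\overline{C^e_p(V)}\subseteq\overline{V}$, which you carry out correctly by lifting to the compact parametrising set in $D^{d+1}$, using that $\phi$ is a closed map and that a continuous map commutes with intersections of nested compacta. Your observation that one may as well use metrizability (rather than sequentiality plus normality) of the finite product to convert ``sequentially closed'' into ``closed and compact'' is sound, and arguably your argument is the more self-contained of the two.
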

The point is that $V\cup C^e_p(V)$ is open in $Z'\cup e$, and we can build up
open sets in the full CW complex $Z$ in this way.  Note also that apart from 
which CW complex $e$ belongs to, 
Lemma~\ref{addacell} is symmetric in $W$ and $Z$, so we
will be able to use it to build up open sets of both $X$ and $Y$ in the proof
of the main theorem. %Theorem~\ref{prodCWcharthm}.

\begin{proof}
Denote the subcomplex $Z'\cup e$ of $Z$ by $Z'e$.
The product $W'\times Z'e$ is a compact CW complex, and 
in particular normal and
sequential.  Thus $H\cap (W'\times Z'e)$ is a closed subset of 
$W'\times Z'e$ disjoint from $\overline{U\times V}$, and so we may take
disjoint open sets $\calO_{U\times V}$ and $\calO_H$ in $W'\times Z'e$ such that
$\overline{U\times V}\subseteq\calO_{U\times V}$ and 
$H\cap (W'\times Z'e)\subseteq\calO_H$.
Now, for every point $(u,v)$ of $\overline{U\times V}$,
there is an open base set $R\times S$ of the product topology on 
$W'\times Z'e$ that contains $(u,v)$ and is contained in $\calO_{U\times V}$.
By shrinking $S$ if necessary, we may assume $S$ is of the form 
$T\cup C^e_n(T)$ for some open subset $T$ of $Z'$ and some $n\in\N$ 
(recall that this also makes sense if $T\cap\bar{e}$ is empty, in which case 
$n$ is arbitrary).
Now, by compactness of $\overline{U\times V}$,
finitely many such base sets $R\times S$ suffice to cover 
$\overline{U\times V}$, and we may choose $p\in\N$
to be strictly greater than all of the corresponding values $n$.
Then $U\times(V\cup C^e_p(V))$ has closure contained in $\calO_{U\times V}$, 
and hence
disjoint from $H$, as required.
\end{proof}

We are now ready to prove our main result.  
By Proposition~\ref{lltkacomponents}, the formulation given here is equivalent
to Theorem~\ref{prodCWcharthm}.

\begin{thm}\label{prodCWcharthmloc}
Let $X$ and $Y$ be CW complexes.  Then $X\times Y$ is a CW complex if and only if
one of the following holds:
\begin{enumerate}
\item $X$ or $Y$ is locally finite.
\item One of $X$ and $Y$ is locally countable, and the other is locally 
less than $\frb$.
\end{enumerate}
\end{thm}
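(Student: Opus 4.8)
As explained above, it remains to prove that if $X$ has countably many cells and $Y$ has fewer than $\frb$ many cells, then $X\times Y$ is a CW complex. The plan is to show that the product topology on $X\times Y$ is sequential: $X\times Y$ with its natural (closure-finite) cell structure is Hausdorff, and each of its closed cells is a product of two finite CW complexes, hence a finite CW complex and in particular sequential, so the criterion following Definition~\ref{CWdefn} then gives that $X\times Y$ is a CW complex. Thus I would fix a sequentially closed $H\subseteq X\times Y$ and a point $(x_0,y_0)\notin H$ and seek a product-open neighbourhood of $(x_0,y_0)$ missing $H$; by the lemma above that the sets $\Uf{f}{x}$ form a neighbourhood base at $x$, it suffices to construct a function $f$ from the cells of $X$ to $\N$ and a function $g$ from the cells of $Y$ to $\N$ with $\Uf{f}{x_0}\times\Uf{g}{y_0}$ disjoint from $H$.

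I would first fix an enumeration $e^Y_\beta$ ($\beta<\lambda$, where necessarily $\lambda<\frb$) of the cells of $Y$ respecting the ordering ``lies in the boundary of'', so that every initial segment spans a subcomplex, and then proceed in two phases: build $f$ first, then read off $g$. Constructing $f$ is the hard phase. Here I would induct on dimension, maintaining that $f$ has been committed on the cells of $X$ of dimension at most $n$ in such a way that every finite approximate product neighbourhood — a product of $\Uf{f}{x_0}$ restricted to a finite subcomplex of $X$ with $\Uf{g}{y_0}$ restricted to a finite subcomplex of $Y$, for $g$ to be chosen — has closure missing $H$; to pass to dimension $n+1$ I would attach the countably many $(n+1)$-cells of $X$ one at a time via Lemma~\ref{addacell}, choosing each new value of $f$ large enough. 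The obstacle is that how large $f$ must be on an $(n+1)$-cell of $X$ depends on how thin $\Uf{f}{x_0}$ already is on the $n$-skeleton, hence on the previously chosen values of $f$; and a value of $f$ chosen too small on a single low cell enlarges $\Uf{f}{x_0}$ on \emph{every} higher-dimensional cell of $X$ whose boundary meets that enlargement — of which there may be infinitely many — so the ``all but finitely many'' slack of eventual domination must not be permitted to accumulate on the countable factor. The remedy, and the substance of Lemma~\ref{fnforal}, is to carry along with each finite approximation a \emph{promise}: a function $\N\to\N$ that pointwise bounds from below all values of $f$ still to be chosen. Since $Y$ has fewer than $\frb$ cells, the demands its cells place on the growth rate of $f$ — essentially one function $\N\to\N$ per cell of $Y$ — are eventually dominated by a single function, which can be folded into the promise; the point of carrying the promise explicitly, rather than merely extracting a dominating $f$ at the end, is precisely that it lets these demands be honoured as genuine lower bounds along an honest extension of $f$, so that no error ever arises on the $X$ side. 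Lemma~\ref{fnforal} then delivers an $f$ with the property that, for each cell $e^Y_\beta$ of $Y$, the collar depths that the infinitely many cells of $X$ demand when $e^Y_\beta$ is later attached on the $Y$ side remain bounded.

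Given such an $f$, the second phase is routine: by recursion along $\beta<\lambda$, when $e^Y_\beta$ is reached its boundary already lies in the subcomplex spanned by the earlier cells, and a compactness argument across the finitely many cells of $X$ meeting any one closed cell of $Y$ — together with the boundedness just extracted — furnishes a single $g(\beta)\in\N$ keeping the closure of the enlarged approximate product neighbourhood disjoint from $H$; limit stages need nothing new, since any finite subcomplex of $Y$ is spanned by finitely many cells. Once $f$ and $g$ are total, $\Uf{f}{x_0}\times\Uf{g}{y_0}$ is the union of the products $\Uf{f\restr W}{x_0}\times\Uf{g\restr Z}{y_0}$ over finite subcomplexes $W$ of $X$ and $Z$ of $Y$, each with closure disjoint from $H$ by construction, so the whole neighbourhood misses $H$ and the proof is complete. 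The single real obstacle is Lemma~\ref{fnforal}: choreographing the promises so that each can always be met, can always be strengthened enough to accommodate the next cell of either complex through Lemma~\ref{addacell}, and together absorb the fewer-than-$\frb$ many growth demands coming from $Y$ without the finitely-many-errors phenomenon ever taking hold — which is exactly where eventual domination, as opposed to pointwise domination, emerges as the right notion, and where, as the introduction observes, the bookkeeping parallels the ``promises'' of Hechler forcing.
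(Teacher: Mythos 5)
Your proposal assembles the right ingredients --- sequentiality, the neighbourhood base $\Uf{f}{x}$, Lemma~\ref{addacell} for one-cell extensions, Hechler-style promises as pointwise lower bounds on the uncommitted values of $f$, and the use of $|J|<\frb$ to eventually dominate the growth demands coming from the cells of $Y$ --- and your description of why na\"{\i}ve eventual domination fails and why promises repair it is exactly right. But the two-phase organisation you propose (build all of $f$ first, then read off $g$ cell by cell) has a genuine circularity that it does not resolve. The demand that a cell $e_{Y,\al}$ places on $f$ is only defined via Lemma~\ref{fnforal} \emph{relative to} a function $s$ recording the collar depths already chosen on the lower-dimensional cells of $\minsub{Y}{\al}$; indeed the hypothesis of that lemma requires $\Uf{F}{x_0}\times\Uf{s}{y_0}$ to already have closure disjoint from $H$ for that specific $s$. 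So ``one function $\N\to\N$ per cell of $Y$'' is not available up front to be folded into the promise: the demands of the $(n+1)$-dimensional cells of $Y$ can only be formulated after $g$ has been fixed on $Y^{n}$, and $g$ on $Y^{n}$ can only be fixed after enough of $f$ has been committed. Your phase-1 invariant, quantified over finite subcomplexes of $Y$ ``for $g$ to be chosen'', is existential in $g$ separately for each finite subcomplex, and those witnesses need not cohere into a single global $g$ --- a cell of $Y^1$ may bound unboundedly demanding cells of higher dimension --- which is precisely the problem the theorem has to solve. (The phrase ``the finitely many cells of $X$ meeting any one closed cell of $Y$'' in your phase 2 is also confused: when $g(\beta)$ is chosen, the $X$-factor $\Uf{f}{x_0}$ lives in the full infinite complex $X$, and a single $q$ working for all of it is exactly the non-routine boundedness that must be engineered in advance.)

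The repair is to interleave the two constructions rather than separate them: recurse on dimension on the $Y$ side and along the cell enumeration on the $X$ side simultaneously, producing total functions $f_0\leq f_1\leq\cdots$ (a descending sequence of Hechler conditions with growing stems) together with partial functions $g_k$ defined on $J^{n(\al_0)+k}$. At stage $k+1$ only the cells of $Y$ of the next dimension are ``relevant''; their demands $f_{k+1,\al}$ are computed from Lemma~\ref{fnforal} with $F=f_k$ and $s=g_k\restr J_\al$, there are fewer than $\frb$ of them so a single $f_{k+1}$ eventually dominates them all, and the witnesses $q_\al$ then extend $g_k$ to $g_{k+1}$ before the next round of demands is even formulated. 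The final $f$ is read off diagonally, $f(i)=f_{i+1}(i)$, and $g(\al)$ is the value assigned at the stage of $\al$'s dimension. Without this interleaving --- or an explicit substitute for it --- your phase 1 cannot know what it is promising, so as written the argument has a gap at its central step.
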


\begin{proof}
As discussed at the start of this section, it suffices to show that if
$X$ has countably many cells and $Y$ has fewer than $\frb$ many cells, then
$X\times Y$ is a CW complex.
So suppose $X$ is a CW complex with countably many cells and 
$Y$ is a CW complex with fewer than $\frb$ many cells.
We shall show that the product topology on $X\times Y$ is sequential, and so indeed
makes $X\times Y$ a CW complex.  
To this end, let
$H$ be an arbitrary sequentially closed subset of $X\times Y$, and take
$(x_0,y_0)\in X\times Y\smallsetminus H$; 
we wish to construct an open neighbourhood
of $(x_0,y_0)$ disjoint from $H$.
%In fact, we shall construct an open neighbourhood of $(x_0,y_0)$ whose closure is
%disjoint from $H$.

Enumerate the cells of $X$ as $e_{X,i}$ for $i$ in $\N$, in such a way that for
each $i$, the boundary of $e_{X,i}$ is contained in $\bigcup_{j<i}e_{X,i}$ ---
this is possible by closure-finiteness.
We define the finite subcomplex $X_i$ of $X$ to be
\[
X_i=\bigcup_{j\leq i}e_{X,i}.
\]
Enumerate 
the cells of $Y$ as $e_{Y,\al}$ for $\al$ in some index set $J$ with cardinality
$\mu<\frb$
(we leave $J$ abstract rather than declaring $J=\mu$ so that the
notation $J^n$ of Definition~\ref{In} remains clear).
Recall our notation $\minsub{Y}{\al}$ from Definition~\ref{Xminal} for the
minimal %(finite) 
subcomplex of $Y$ containing $e_{Y,\al}$.
Let $m(i)$ be the dimension of cell $e_{X,i}$, and let $n(\al)$ be the dimension
of cell $e_{Y,\al}$.
Let $e_{X,i_0}$ be the unique open cell of $X$ containing $x_0$,
and $e_{Y,\al_0}$ the unique open cell of $Y$ containing $y_0$.
We shall construct functions $f\from \N\to\N$ and $g\from J\to\N$
such that $\Uf{f}{x_0}\times \Uf{g}{y_0}$ is disjoint from $H$.
As ever, the construction is by recursion, but we shall recurse over dimension
on the $Y$ side and over $i$ on the $X$ side, whilst also keeping track of a 
lower bound function for the $X$ side.
Specifically, we shall construct for each $i$ in $\N$ functions
$f_i\from\N\to\N$ and $g_i\from J^{n(\al_0)+i}\to\N$ such that 
\renewcommand{\theenumi}{\alph{enumi}}
\begin{enumerate}
\item\label{cdfH} $\Uf{f_i}{x_0}\times\Uf{g_i}{y_0}$ 
has closure disjoint from $H$,
\item\label{extends} for all $j>i$, $g_j\downarrow{n(\al_0)+i} = g_i$,
$f_j\restr i=f_i\restr i$, and for all $n\geq i$, $f_j(n)\geq f_i(n)$.
\end{enumerate}
\renewcommand{\theenumi}{\arabic{enumi}}
With such functions in hand we may define $f$ and $g$ by
$f(i)=f_{i+1}(i)$ and $g(\al)=g_{n(\al)-n(\al_0)}(\al)$.
Then 
\begin{align*}
\Uf{f}{x_0}\times\Uf{g}{y_0}
&=\bigcup_{i\in\N}\Ufrestrn{f}{i}{x_0}\times\Ufn{g}{n(\al_0)+i}{y_0}\\
&=\bigcup_{i\in\N}\Ufrestrn{f_i}{i}{x_0}\times\Uf{g_i}{y_0},
\end{align*}
each term of which will be disjoint from $H$ by construction.

\begin{aside}
The reader familiar with Hechler forcing (the simplest 
forcing partial order used for constructing models of set theory with 
a specified value of $\frb$)
will note that the requirements on the functions $f_i$ in (\ref{extends})
are that they form a descending sequence of Hechler conditions,
where the stem of $f_i$ is $f_i\restr i$.
\end{aside}

For the base case of the construction, consider 
$X\times\minsub{Y}{\al_0}$.
Since $\minsub{Y}{\al_0}$ is a finite CW complex, 
$X\times\minsub{Y}{\al_0}$
is a CW complex, $(X\times \minsub{Y}{\al_0})\cap H$ is closed, and
we may choose a function $f_0\from\N\to\N$ and a natural number number
$g_0(\al_0)$ such that 
$\Uf{f_0}{x_0}\times B_{g_0(\al_0)}(y_0)$ has closure disjoint from $H$.
For $\al\neq\al_0$ in $J^{n(\al_0)}$, set $g_0(\al)=0$, so we have
$g_0$ defined on all of $J^{n(\al_0)}$; 
since $\Uf{g_0}{y_0}=B_{g_0(\al_0)}(y_0)$, we have that
$\Uf{f_0}{x_0}\times \Uf{g_0}{y_0}$ has closure disjoint from $H$.

\begin{lem}\label{fnforal}
Let $Y'$ be a finite subcomplex of $Y$ containing $y_0$, 
let $F$ be a function from $\N$ to $\N$ and 
$s$ a function from the indices of $Y'$ to $\N$
such that $\Uf{F}{x_0}\times \Uf{s}{y_0}\subseteq X\times Y'$ 
has closure disjoint from 
$H$.
Let $i$ be a natural number and let $Y''$ be a subcomplex of $Y$ that is a 
one cell extension of $Y'$, $Y''=Y'\cup e_\al$.
Then there is a function $f\from\N\to\N$ 
such that
\begin{enumerate}
\item \label{fdomF}
$f(n)\geq F(n)$ for all $n$ in $\N$, and $f(n)=F(N)$ for all $n<i$,
\item \label{allgreatergood}
for every $f'\from\N\to\N$ such that $f'\geq^* f$ and $f'\geq F$,
there is a $q\in\N$ such that
$\Uf{f'}{x_0}\times \Uf{s\cup\{(\al,q)\}}{y_0}$ has closure disjoint from $H$.
\end{enumerate}
\end{lem}

\begin{proof}

The construction of $f$ is by recursion on $n\geq i$, with 
repeated applications of Lemma~\ref{addacell}.
As the base case, set $f\restr i= F\restr i$. 

Suppose we have constructed $f\restr n$.  For every sequence 
$r\from n\to\N$ such that $F(m)\leq r(m)\leq f(m)$ for all $m<n$,
let
$q(r)$ be the least $q\in\N$ such that
$\Uf{r}{x_0}\times \Uf{s\cup\{(\al,q)\}}{y_0}$ has closure disjoint from $H$;
such a $q$ must exist by assumption on $F$ and $s$ and Lemma~\ref{addacell}.
Then let $p(r)$ be the least $p\in\N$ such that
$\Uf{r\cup\{(n,p)\}}{x_0}\times\Uf{s\cup\{(\al,q(r))\}}{y_0}$ 
has closure disjoint from $H$, again applying Lemma~\ref{addacell}.
Finally, we define $f(n)$ as
\[
f(n)=\max(\{p(r)\st F\restr n\leq r\leq f\restr n\}\cup\{F(n)\}).
\]
We claim that this recursive construction yields a function $f\from\N\to\N$
as per the statement of the lemma. 

(\ref{fdomF}) is immediate from the construction.
For (\ref{allgreatergood}), suppose $f'\from\N\to\N$ is such that
$f'\geq^*f$ and $f'\geq F$.
Let $n_0\in\N$ be such that for all $n\geq n_0$, $f'(n)\geq f(n)$.
Let $r$ be the $n_0$-tuple defined by $r(m)=\min(f(m), f'(m))$.
Note that $r\restr i=f\restr i=F\restr i$.  The natural number $q(r)$ is
then a $q$ as required by (\ref{allgreatergood}). 
Indeed we shall show by induction that, letting $f''$ be the function
\[
f''(n)=\begin{cases}
r(n)&\text{if }n<n_0\\
f(n)&\text{if }n\geq n_0,
\end{cases}
\]
we obtain that
$\Uf{f''}{x_0}\times \Uf{s\cup\{(\al,q(r))\}}{y_0}$ has closure disjoint from $H$.
The result will then follow, as $f'\geq f''$ and hence
$\Uf{f'}{x_0}\subseteq\Uf{f''}{x_0}$.

For the base case, 
$\Uf{r}{x_0}\times \Uf{s\cup\{(\al,q(r))\}}{y_0}$ has closure disjoint from $H$
by definition of $q(r)$. 
For $n\geq n_0$, suppose we have shown that
$\Ufrestrn{f''}{n}{x_0}\times\Uf{s\cup\{(\al,q(r))\}}{y_0}$ 
has closure disjoint from $H$.  Then by minimality $q(f''\restr n)\leq q(r)$.  
Also $f''(n)=f(n)\geq p(f''\restr n)$;
so 
$\Ufrestrn{f''}{n+1}{x_0}\times \Uf{s\cup\{(\al,q(f''\restr\,n))\}}{y_0}$ has closure disjoint from $H$, whence the possibly smaller set
$\Ufrestrn{f''}{n+1}{x_0}\times \Uf{s\cup\{(\al,q(r))\}}{y_0}$ has closure disjoint from $H$, as required for the inductive step.
We therefore have that for every $n$,
$\Ufrestrn{f''}{n}{x_0}\times\Uf{s\cup\{(\al,q(r))\}}{y_0}$ has closure disjoint from
$H$. Since
\[
\bigcup_{n\in\N}\overline{\Ufrestrn{f''}{n}{x_0}\times\Uf{s\cup\{(\al,q(r))\}}{y_0}}
\]
is closed in every cell of $X\times Y''$, it is closed in $X\times Y''$,
and so
$\Uf{f''}{x_0}\times\Uf{s\cup\{(\al,q(r))\}}{y_0}$ has closure disjoint from
$H$, as required.
\end{proof}

Returning to the construction of the functions $f_i$ and $g_i$ for $i$ in $\N$,
%proof of Theorem~\ref{prodCWcharthm}, 
suppose that for all $j\leq k$
we have constructed the functions $f_j\from\N\to\N$ and 
$g_j\from J^{n(\al_0)+j}\to\N$ satisfying the requirements
(\ref{cdfH}) and (\ref{extends}) above. % of the recursive construction.
Call $\al\in J$ \emph{relevant} if $n(\al)=n(\al_0+k+1)$; these are the indices 
we need to extend the definition of $g$ to for the inductive step.

For relevant $\al$, %\in J$ such that $n(\al)=n(\al_0+k+1)$,
let $Y'_\al$ be $(\minsub{Y}{\al_0}\cup\minsub{Y}{\al})\smallsetminus e_\al$
and let $J_\al$ be the set of indices of cells in $Y'_\al$.
Apply Lemma~\ref{fnforal} with 
$f_k$ as $F$,
$\minsub{Y}{\al_0}\cup\minsub{Y}{\al}$ as $Y''$,
$Y'_\al$ %$\minsub{Y}{\al}\smallsetminus{e_\al}$ 
as $Y'$,
$g_k\restr J_\al$ as $s$,
and $k+1$ as $i$.
The requirement of the lemma that 
$\Uf{f_k}{x_0}\times\Ufrestrn{g_k}{J_\al}{y_0}$ 
have closure disjoint from $H$ holds by the inductive hypothesis.
We thus get for each relevant $\al$ a function $f_{k+1,\al}$
satisfying (\ref{fdomF}) and (\ref{allgreatergood}) of Lemma~\ref{fnforal}.
Since there are fewer than $\frb$ many members of $J$, there is a single
function $f_{k+1}\from\N\to\N$ that eventually dominates $f_{k+1,\al}$
for every relevant $\al$. %\in J$ with $n(\al)=n(\al_0)+k+1$.  
Taking $f_{k+1}$ as $f'$ in 
(\ref{allgreatergood}) of Lemma~\ref{fnforal}, we have that for each 
relevant $\al$ %in $J^{n(\al_0)+k+1}$ 
there is $q_\al\in\N$ such that the open subset
$\Uf{f_{k+1}}{x_0}\times 
\Uf{(g_k\restr J_{\al})\cup\{(\al,q_\al)\}}{y_0}$
of $X\times(\minsub{Y}{\al_0}\cup\minsub{Y}{\al})$
has closure disjoint from $H$.
Products commute with closures in the product topology, so
\begin{multline*}
\overline{\Uf{f_{k+1}}{x_0}\times 
\Uf{(g_k\restr J_{\al})\cup\{(\al,q_\al)\}}{y_0}}\\
=
\overline{\Uf{f_{k+1}}{x_0}}\times 
\overline{\Uf{(g_k\restr J_{\al})\cup\{(\al,q_\al)\}}{y_0}},
\end{multline*}
and since $Y^{n(\al_0)+k+1}$ has the weak topology,
\begin{multline*}
\overline{\bigcup_{\al\text{ relevant}}\Uf{(g_k\restr J_{\al})\cup\{(\al,q_\al)\}}{y_0}}\\
=
\bigcup_{\al\text{ relevant}}\overline{\Uf{(g_k\restr J_{\al})\cup\{(\al,q_\al)\}}{y_0}}.
\end{multline*}
So $\Uf{f_{k+1}}{x_0}\times 
\bigcup_{\al\text{ relevant}}\Uf{(g_k\restr J_{\al})\cup\{(\al,q_\al)\}}{y_0}$
is an open subset of $X\times Y^{n(\al_0)+k+1}$ with closure disjoint from $H$.
Since $f_{k+1}\geq f_k$, $\Uf{f_{k+1}}{x_0}\subseteq\Uf{f_k}{x_0}$,
and so $\Uf{f_{k+1}}{x_0}\times\Uf{g_k}{y_0}$ has closure disjoint from $H$.
Taking $g_{k+1}=g_k\cup\{(\al,q_\al)\st\al\text{ is relevant}\}$ completes the
inductive step.

We thus have a recursive construction of the functions $f_i$ and $g_i$ as
required, which as discussed above allows us to form the functions $f$ and
$g$ defining an open neighbourhood $\Uf{f}{x_0}\times\Uf{g}{y_0}$ of
$(x_0,y_0)$ disjoint from $H$.  Since $H$ was an arbitrary sequentially closed
subset of $X\times Y$ and $(x_0,y_0)$ was an arbitrary point in the complement
of $H$ in $X\times Y$, this shows that $X\times Y$ is sequential, and
thus bears the weak topology.  That is, $X\times Y$ is a CW complex.
\end{proof}

\section*{Acknowledgements}

The author would like to thank Jeremy Rickard for drawing his attention to
this question, and Nicola Gambino, Dugald Macpherson and Severin Mejak
for helpful comments on drafts of this paper.  
The author also gratefully acknowledges
the support for this work of the EPSRC through Early Career Fellowship
EP/K035703/2, \emph{Bringing set theory and algebraic topology together}.

\bibliographystyle{plain}
\bibliography{ABT}

\end{document}